\newtheorem{thm}{Theorem}
\newdefinition{definition}{Definition}
\newdefinition{rmk}{Remark}
\newdefinition{cor}{Corollary}
\newdefinition{proposition}{Proposition}
\newtheorem{lem}{Lemma}
\newdefinition{example}{Example}
\newcommand{\R}{\mathbb{R}}
\newcommand{\be}[1]{\begin{equation}\label{#1}}
\newcommand{\ee}{\end{equation}}
\journal{arXiv}
\begin{document}
\begin{frontmatter}
\title{Interpolation of scattered data in $\mathbb{R}^3$ using minimum $L_p$-norm networks, $1<p<\infty$}
\author[rvt]{K.~Vlachkova}
 \ead{krassivl@fmi.uni-sofia.bg}
 \address[rvt]{Faculty of Mathematics and Informatics, Sofia University ``St. Kliment Ohridski"\\
Blvd. James Bourchier 5, 1164 Sofia,  Bulgaria}

\date{\today}
\begin{abstract}
We consider the extremal problem of interpolation of scattered data in $\mathbb{R}^3$ by smooth curve networks with minimal
$L_p$-norm of the second derivative for $1<p<\infty$. The problem for $p=2$ was set and solved by
Nielson (1983) \cite{N}. Andersson et al. (1995) \cite{AEIV} gave a new proof of Nielson's result by using a different approach. Partial results for the problem for $1<p<\infty$ were announced without proof
in (Vlachkova (1992) \cite{V2}).
Here we present a complete characterization of the solution for
$1<p<\infty$. Numerical experiments are visualized and presented to illustrate and support our results.
\end{abstract}
\begin{keyword} Extremal scattered data interpolation; Minimum norm networks
\end{keyword}

\end{frontmatter}
\section{Introduction}

Scattered data interpolation is a fundamental problem in approximation
theory and CAGD. It finds applications in a variety of fields such as automotive, aircraft and ship design, architecture, medicine, computer graphics, and more. Recently, the problem has become particularly relevant in bioinformatics and scientific visualization. The interpolation of scattered data in $\mathbb{R}^3$ attracted a considerable amount of research. Different methods and approaches were proposed and discussed, excellent surveys are, e.\,g.~\citep{FN,LF,MLLMPDS}, see also~\citep{D,Am,ALP,CG}.

Consider the following problem: Given {\it scattered data}
$(x_i,y_i,z_i)$ $\in \mathbb{R}^3$,\ $i=1,\dots ,n$, that is points $V_i=(x_i,y_i)$ are different and non-collinear, find a bivariate function
$F(x,y)$ defined in a certain domain $D$ containing points $V_i$, such that $F$ possesses continuous partial derivatives up to a given order
and $F(x_i,y_i)=z_i$.

\citet{N} proposed a three steps method for solving the problem as follows:

\smallskip
{\sl Step 1.} {\it Triangulation}. Construct a triangulation $T$ of
$V_i,\ i=1,\dots n$.

\smallskip
{\sl Step 2.} {\it Minimum norm network}. The interpolant $F$ and
its first order partial derivatives are defined on the edges of $T$
to satisfy an extremal property. The obtained minimum norm network
is a cubic curve network, i.~e. on every edge of $T$ it is a cubic
polynomial.

\smallskip
{\sl Step 3.} {\it Interpolation surface}. The obtained network is extended to $F$ by an
appropriate {\it blending method}.

\citet{AEIV} paid special attention to Step~2 of the
above method, namely the construction of the minimum norm network. Using a different approach,
the authors gave a new proof of Nielson's result. They constructed a system
of simple linear curve networks called {\it basic curve networks} and then
represented the second derivative of the minimum norm network as a linear combination of these
basic curve networks.
\smallskip

The problem of interpolation of scattered data by minimum $L_p$-norms networks for  $1<p<\infty$ was considered in~\citep{V2}
 where sufficient conditions for the solution were formulated without proof.

In this paper we prove the existence and the uniqueness of the solution to the problem for $1<p<\infty$ and provide its complete characterization using the basic curve networks defined in~\citep{AEIV}.

 The paper is organized as follows. In Sect.~2 we introduce notation, formulate the extremal problem for interpolation by minimum $L_p$-norms networks for  $1<p<\infty$, and present some related results. In Sect.~3 we prove the existence and the uniqueness of the solution to the problem for $1<p<\infty$. In Sect.~4 we establish a full characterization of the solution. In final Sect.~4 we present the results from our experimental work.
Based on
numerical solving of nonlinear systems of equations we apply computer modeling and visualization
tools to illustrate and support our results.

\section{Preliminaries and related results}

Let $n\geq 3$ be an integer and $P_i:=(x_i,y_i,z_i),\ i=1,\dots ,n$
be different points in $\mathbb{R}^3$. We call this set of points {\it
data}. The data are {\it scattered} if the projections
$V_i:=(x_i,y_i)$ onto the plane $Oxy$ are different and
non-collinear.
\begin{definition}\label{def1}
A collection of non-overlapping, non-degenerate triangles in
$\mathbb{R}^3$ is a {\it triangulation} of the points $V_i,\ i=1,\dots ,n$,
if the set of the vertices of the triangles coincides with the set
of the points $V_i,\ i=1,\dots ,n$.
\end{definition}
Hereafter we assume that a triangulation $T$
of the points $V_i,\ i=1,\dots ,n$, is given and fixed. The union of all triangles in $T$ is
a polygonal domain which we denote by $D$.
In general $D$ is a collection of polygons with holes. The set of
the edges of the triangles in $T$ is denoted by $E$. If there is an
edge between $V_i$ and $V_j$ in $E$, it will be referred to by
$e_{ij}$ or simply by $e$ if no ambiguity arises.

\begin{definition}
{A {\it curve network} is a collection of real-valued univariate functions
$\{f_e\}_{e\in E}$
defined on the edges in $E$.}
\end{definition}
With any real-valued bivariate function $F$ defined on $D$ we naturally
associate the curve network defined as the restriction of $F$ on the edges
in $E$, i.~e. for $e=e_{ij}\in E$,
\begin{equation} \label{e0}
\begin{split}
  f_e(t) := F\Bigl(\bigl(1-\frac{t}{\| e\|}\bigr)x_i+\frac{t}{\| e\|}\,x_j,\,
                   \bigl(1-\frac{t}{\| e\|}\bigr)y_i+\frac{t}{\| e\|}\,y_j\Bigr), \\
  \text{where } 0\le t\le \| e\| \ \text{and} \ \|e\|=\sqrt{(x_i-x_j)^2+(y_i-y_j)^2}.
\end{split}
\end{equation}
Furthermore, according to the context $F$ will denote either a
real-valued bivariate function or a curve network defined by
(\ref{e0}). For $p$, such that $1<p<\infty$, we introduce the
following class of {\it smooth interpolants}
$$
{\cal F}_p:=\{ F(x,y)\, | \, F(x_i,y_i)=z_i,\ i=1,\dots ,n,\
\partial F/\partial x,
\partial F/\partial y \in C(D),\ f'_e\in AC,\ f^{\prime\prime}_e\ \in
L_p,\ e\in E\},
$$
where $C(D)$ is the class of bivariate continuous functions defined in $D$,
$AC$ is the class of univariate absolutely continuous functions defined in $[ 0,\| e\|]$, and $L_p$ is the class of univariate functions defined in $[ 0,\| e\|]$ whose p-th power of the absolute value is Lebesgue integrable.
The restrictions on $E$ of the functions in ${\cal F}_p$
form the corresponding class of so-called {\it smooth interpolation
curve networks}
\be{e31}
{\cal C}_p(E):=\left\{ F_{|E}=\{f_e\}_{e\in E}\ |\ F (x,y)\in {\cal
F}_p,\ e\in E\right\}.
\ee
We note that the class ${\cal F}_p$ is nonempty
since, e.~g. Clough-Tocher
~\cite{CT}
and Powell-Sabin~\cite{PS}
 interpolants belong to it. Hence ${\cal
C}_p(E)$ is nonempty too. The smoothness of the interpolation curve network $F=\{f_e\}_{e\in E}\in {\cal C}_p(E)$
geometrically means that if we consider the graphs of functions $\left\{f_e\right\}_{e\in E}$ as curves in $\R^3$ then at every point $V_i,\ i=1,\dots ,n$, they have a common tangent plane.

Inner product and $L_p$-norm are defined in ${\cal C}_p(E)$ by
 \begin{eqnarray*}
&&\langle F, G\rangle=\int_E FG=\sum_{e\in E}\int
_0^{\|e\| }f_e(t)g_e(t)dt,\\
&&\|F\|_p:= \left( \sum_{e\in E} \int_0^{\| e\|
}|f_e(t)|^pdt\right)^{1/p}, \quad 1\leq p<\infty ,
\end{eqnarray*}
where $F\in {\cal C}_p(E)$ and
$G:=\{g_e\}_{e\in E}\in {\cal C}_p(E)$.
We denote the networks of the
second derivative of $F$ by $F^{\prime\prime}:=\{f''_e\}_{e\in E}$ and
consider the following extremal problem:
$$({\bf P}_p)\quad  \mbox{\it\ Find\ }F^*\in {\cal C}_p(E)\  \mbox{\it\
such that }\| F^{*\prime\prime}\|_p=\inf_{F\in {\cal C}_p(E)}\|
F''\|_p.
$$

{Problem $({P}_p)$ is a generalization of the classical univariate extremal problem $(\tilde{P}_p)$ for interpolation of data in $\R^2$ by a univariate function with minimal $L_p$-norm of the second
derivative. The latter was studied by Holladay~\cite{Holl} for $p=2$ and by de Boor~\cite{deB1} in more general settings.\footnote{C. de Boor studied the more general problem of minimum $L_p$-norm of the $k$-th derivative, $k\geq 1$, $1<p\leq\infty$.}
Holladay~\cite{Holl} proved that the natural interpolating cubic spline is the unique solution to $(\tilde{P}_2)$.
Nielson's approach to construct minimum norm network can be seen as an extention of Holladay's proof~\citep{Holl}.}

For $i=1,\dots ,n$ let $m_i$ denote the degree of the vertex $V_i$,
i.~e. the number of the edges in $E$ incident to $V_i$.
Furthermore, let $\{ e_{ii_1},\dots ,e_{ii_{m_i}}\}$ be the edges
incident to $V_i$ listed in clockwise order around $V_i$. The first
edge $e_{ii_1}$ is chosen so that the coefficient
$\lambda_{1,i}^{(s)}$ defined below is not zero - this is always
possible. A {\it basic curve network} $B_{is}$ is defined on $E$ for
any pair of indices $is$, such that $i=1,\dots ,n$ and $s=1,\dots
,m_i-2$, as follows (see Fig. \ref{basic}):
\be{e5}
B_{is}:= \left\{
\begin{array}{ll}
\lambda^{(s)}_{r,i}\left( 1-\frac{t}{\| e_{ii_{s+r-1}} \|
}\right) & {\rm on}\ e_{ii_{s+r-1}},\ r=1,2,3,\\
& \ \ \ 0\leq t\leq \| e_{ii_{s+r-1}}\| \\[1ex]
0 & {\rm on\ the\ other\ edges\ of\ }E.
\end{array}
\right.
\ee

The coefficients $\lambda_{r,i}^{(s)},\ r=1,2,3$, are
uniquely determined to sum to one and to form a zero linear
combination of the three unit vectors along the edges
$e_{ii_{s+r-1}}$ starting at $V_i$.
\begin{figure}[ht]
\begin{center}
\begin{minipage}{4.3cm}
\begin{center}
{\bfseries \includegraphics[width=1.\textwidth]{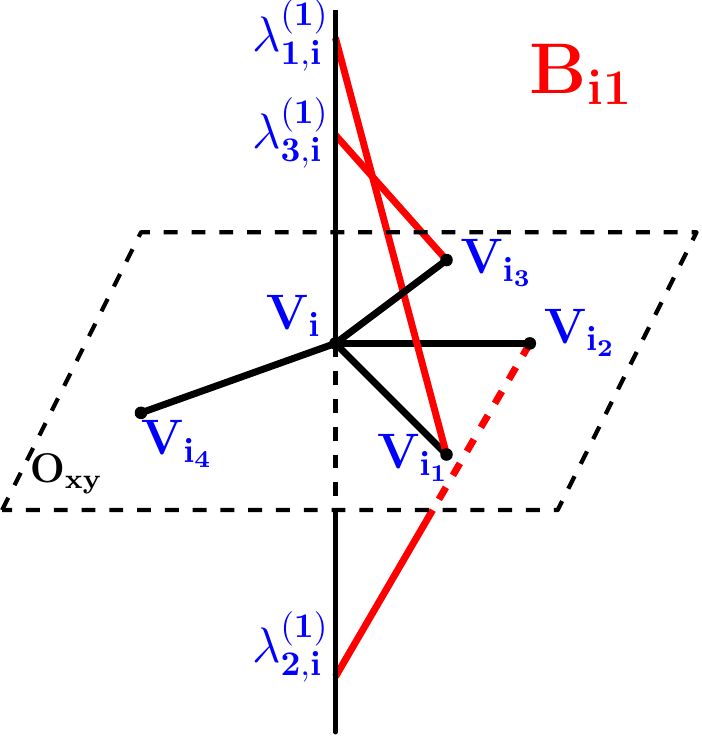}}
\end{center}
\end{minipage}
\hspace*{1.5cm}
\begin{minipage}{4.3cm}
\begin{center}
{\bfseries \includegraphics[width=1.\textwidth]{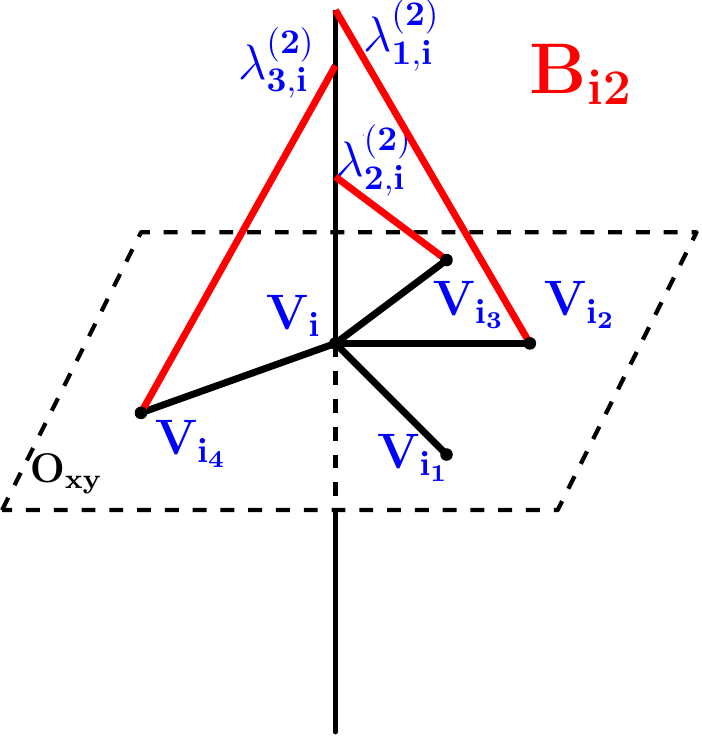}}
\end{center}
\end{minipage}
\caption{The basic curve networks for vertex $V_i$ of degree $m_i=4$}\label{basic}
\end{center}
\end{figure}

Note that basic curve networks are associated with points that have
at least three edges incident to them.
We denote by
$N_B$ the set of pairs of indices $is$ for which a basic curve
network is defined, i.~e.,
$$ N_B:=\{is\ |\ m_i\geq 3,\ i=1,\dots, n,\ s=1,\dots, m_i-2\}.$$

With each basic curve network $B_{is}$ for $is\in N_B$ we associate
a number $d_{is}$ defined by
$$
d_{is}=\frac{\lambda^{(s)}_{1,i}}{\| e_{ii_s}\| }
(z_{i_s}-z_i)+\frac{\lambda^{(s)}_{2,i}}{\| e_{ii_{s+1}}\|
}(z_{i_{s+1}}- z_i)+ \frac{\lambda^{(s)}_{3,i}}{\| e_{ii_{s+2}}\|
}(z_{i_{s+2}}-z_i),
$$
which reflects the position of the data in the supporting set of
$B_{is}$.
The following
two lemmas are proved in~\citep{AEIV} for $p=2$ but they clearly hold for any $p$, $1<p<\infty$.
\begin{lem}\label{lemma1}
{Functions $B_{is},\ is\in N_B$,
are linearly independent in $E$.}
\end{lem}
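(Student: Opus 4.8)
My plan is to assume a vanishing combination $\sum_{is\in N_B}c_{is}B_{is}=0$ on $E$ and to deduce that every coefficient $c_{is}$ is zero. The decisive structural remark, read off directly from (\ref{e5}), is that $B_{is}$ is supported only on the edges incident to $V_i$ and that on each such edge it is the linear function taking one of the values $\lambda^{(s)}_{1,i},\lambda^{(s)}_{2,i},\lambda^{(s)}_{3,i}$ at $V_i$ and vanishing at the opposite endpoint. Consequently, if I evaluate the identity at the endpoint $V_i$ of an edge incident to $V_i$, every basic network centered at a vertex $V_j$ with $j\neq i$ drops out: such a network can only be nonzero on the shared edge $e_{ij}$, where it vanishes at the endpoint farthest from $V_j$, namely at $V_i$. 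This decouples the problem completely, so that coefficients attached to distinct vertices never interact and it suffices to treat each vertex $V_i$ separately.

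Fixing $V_i$ of degree $m_i$, I record, for each incident edge $e_{ii_k}$ with $k=1,\dots,m_i$, the value of the combination at its endpoint $V_i$. The edge $e_{ii_k}$ lies in the support of $B_{is}$ exactly when $k\in\{s,s+1,s+2\}$, i.e. when $s\in\{k-2,k-1,k\}\cap\{1,\dots,m_i-2\}$, and there $B_{is}$ contributes $\lambda^{(s)}_{k-s+1,i}$. Hence the hypothesis yields the homogeneous banded system $\sum_{s}c_{is}\,\lambda^{(s)}_{k-s+1,i}=0$ for $k=1,\dots,m_i$, in the $m_i-2$ unknowns $c_{i1},\dots,c_{i,m_i-2}$, with each column of its coefficient matrix occupying only the three rows $s,s+1,s+2$.

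I then solve this system by forward substitution. For $k=1$ the only surviving index is $s=1$, so the first equation reads $c_{i1}\lambda^{(1)}_{1,i}=0$; since the first edge $e_{ii_1}$ was chosen precisely so that the leading coefficients $\lambda^{(s)}_{1,i}$ do not vanish, I obtain $c_{i1}=0$. Proceeding inductively, if $c_{i1}=\dots=c_{i,s_0-1}=0$ then the equation for $k=s_0$ collapses to $c_{is_0}\lambda^{(s_0)}_{1,i}=0$, forcing $c_{is_0}=0$. Thus all coefficients attached to $V_i$ vanish, and since $V_i$ was arbitrary the networks $B_{is}$ are linearly independent. Equivalently, the top $m_i-2$ rows of the banded matrix form a lower-triangular block with nonzero diagonal entries $\lambda^{(s)}_{1,i}$, so the matrix has full column rank.

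The only genuinely delicate point, which I expect to be the crux, is the nonvanishing of the pivots $\lambda^{(s)}_{1,i}$. Solving the defining $3\times 3$ system for $\lambda^{(s)}_{r,i}$ by Cramer's rule, one checks that $\lambda^{(s)}_{1,i}=0$ holds precisely when the unit vectors along $e_{ii_{s+1}}$ and $e_{ii_{s+2}}$ are antiparallel, i.e. these two edges are collinear through $V_i$. Since the $m_i\ge 3$ angular gaps at $V_i$ are positive and sum to $2\pi$, at most one of them can equal $\pi$, so among the cyclically consecutive edges at $V_i$ there is at most one antiparallel pair. Numbering the edges so that this exceptional pair, if present, straddles the cut between $e_{ii_{m_i}}$ and $e_{ii_1}$ removes it from every triple $(e_{ii_s},e_{ii_{s+1}},e_{ii_{s+2}})$ and makes all pivots nonzero; this is exactly the choice of first edge asserted to be possible in the construction. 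Once this nondegeneracy bookkeeping is in place, the elimination above closes the argument.
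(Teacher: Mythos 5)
Your proof is correct and complete. The paper itself does not prove this lemma --- it defers to \cite{AEIV} --- but your argument (evaluating the vanishing combination at the vertex $V_i$ along each incident edge, which decouples the vertices because every $B_{js'}$ with $j\neq i$ vanishes at $V_i$, and then running forward substitution through the resulting banded system with nonzero pivots $\lambda^{(s)}_{1,i}$) is exactly the standard one, and you moreover supply the justification, via the antiparallel-pair count, for the paper's bare assertion that the first edge can always be chosen to make those pivots nonzero.
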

\begin{lem}\label{lemma2}
{$F\in {\cal C}_p(E)\ \text{if and only if}\ \langle
F'',B_{is}\rangle=d_{is},\ is\in N_B$.}
\end{lem}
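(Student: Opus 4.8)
The plan is to reduce both implications to a single computation: integration by parts of $\langle F'', B_{is}\rangle$ over the three edges that support $B_{is}$. Writing $e_r := e_{ii_{s+r-1}}$ and $L_r := \|e_r\|$ for $r=1,2,3$, and recalling that $B_{is}$ equals the linear function $\lambda^{(s)}_{r,i}(1 - t/L_r)$ on $e_r$ and vanishes on all other edges, I would first expand
\[
\langle F'', B_{is}\rangle = \sum_{r=1}^{3}\lambda^{(s)}_{r,i}\int_0^{L_r} f''_{e_r}(t)\Bigl(1-\tfrac{t}{L_r}\Bigr)\,dt .
\]
Since $f'_{e_r}\in AC$ and $f''_{e_r}\in L_p\subset L_1$ on a bounded interval, integration by parts is legitimate, and using the interpolation conditions $f_{e_r}(0)=z_i$, $f_{e_r}(L_r)=z_{i_{s+r-1}}$ one obtains
\[
\int_0^{L_r} f''_{e_r}(t)\Bigl(1-\tfrac{t}{L_r}\Bigr)\,dt = \frac{z_{i_{s+r-1}}-z_i}{L_r} - f'_{e_r}(0).
\]
Summing over $r$ and recalling the definition of $d_{is}$ yields the key identity
\[
\langle F'', B_{is}\rangle = d_{is} - \sum_{r=1}^{3}\lambda^{(s)}_{r,i}\,f'_{e_r}(0),
\]
so that the integral condition $\langle F'',B_{is}\rangle = d_{is}$ is equivalent to the purely local vertex condition $\sum_{r=1}^{3}\lambda^{(s)}_{r,i} f'_{e_r}(0)=0$.

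For the forward implication I would use that $F\in{\cal C}_p(E)$ has a common tangent plane at $V_i$, i.e. a gradient $\nabla F(V_i)$ with $f'_{e}(0)=\nabla F(V_i)\cdot u_e$ for every edge $e$ incident to $V_i$, where $u_e$ is the corresponding unit direction. Because the coefficients $\lambda^{(s)}_{r,i}$ are defined precisely so that $\sum_{r}\lambda^{(s)}_{r,i}u_{e_r}=0$, the local condition follows at once: $\sum_r \lambda^{(s)}_{r,i} f'_{e_r}(0) = \nabla F(V_i)\cdot\sum_r\lambda^{(s)}_{r,i} u_{e_r}=0$. Hence $\langle F'',B_{is}\rangle=d_{is}$ for every $is\in N_B$.

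The converse is where the real content lies, and I expect it to be the main obstacle. Assuming the integral conditions, the identity above gives, for each fixed vertex $V_i$ of degree $m_i\ge 3$, the $m_i-2$ linear relations $\sum_r\lambda^{(s)}_{r,i} f'_{e_{ii_{s+r-1}}}(0)=0$, $s=1,\dots,m_i-2$, among the directional derivatives $d_k:=f'_{e_{ii_k}}(0)$. I would encode these as orthogonality of the vector $(d_1,\dots,d_{m_i})$ to the coefficient vectors $c^{(s)}$ supported on positions $\{s,s+1,s+2\}$. The aim is to show this forces $(d_1,\dots,d_{m_i})$ into the two-dimensional subspace $W$ of tuples of the form $((a,b)\cdot u_k)_{k=1}^{m_i}$, which is exactly the condition that a gradient $(a,b)=\nabla F(V_i)$ realizing all the directional derivatives exists, i.e. a common tangent plane. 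The argument is a dimension count: by the defining property of the $\lambda$'s each $c^{(s)}$ lies in $W^{\perp}$; since the data are non-collinear, $\dim W =2$ and $\dim W^{\perp}= m_i-2$; and because the first edge is chosen so that $\lambda^{(s)}_{1,i}\neq 0$, the vectors $c^{(s)}$ have strictly increasing leading indices, hence are linearly independent (this is essentially the local content of Lemma~\ref{lemma1}). Thus the $c^{(s)}$ form a basis of $W^{\perp}$, the relations say exactly $(d_1,\dots,d_{m_i})\in (W^{\perp})^{\perp}=W$, and the sought gradient exists.

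Finally, I would remark that a curve network which interpolates the data, has the required edge regularity, and admits a common tangent plane at every vertex is the restriction of some $F\in{\cal F}_p$ (via a blending construction such as Clough--Tocher or Powell--Sabin), so the tangent planes recovered at every $V_i$ indeed place $F$ in ${\cal C}_p(E)$, which completes the equivalence.
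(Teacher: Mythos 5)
The paper does not actually prove Lemma~2 --- it only remarks that both lemmas ``are proved in [AEIV] for $p=2$ but clearly hold for any $p$'' --- so your proposal supplies the missing argument, and it is the same argument as in the cited source: integration by parts reduces $\langle F'',B_{is}\rangle=d_{is}$ to the vertex condition $\sum_r\lambda^{(s)}_{r,i}f'_{e_r}(0)=0$, and the staircase/dimension count shows these $m_i-2$ conditions are exactly equivalent to the existence of a common tangent plane at $V_i$. The computation and both implications are correct; the only nitpick is that $\dim W=2$ follows not from global non-collinearity of the data but from the local fact that $V_i$ is a vertex of a non-degenerate triangle, so two of its incident unit edge vectors are linearly independent. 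Your closing remark (that tangent planes at all vertices plus edge regularity suffice for membership in ${\cal C}_p(E)$ via a Clough--Tocher or Powell--Sabin extension) is exactly the identification the paper itself uses implicitly, both in the sentence following~(\ref{e31}) and in the proof of Theorem~\ref{th1}.
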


\section{Existence and uniqueness of the solution}

In the next theorem we prove that problem $(P_p)$ for $1< p<\infty$ always has a unique solution which we call {\it optimal curve network}.

\begin{thm}\label{th1}
{\it The extremal problem $(P_p)$ for $1<
p<\infty$ always has a unique solution . }
\end{thm}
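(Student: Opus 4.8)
The plan is to recast $(P_p)$ as a minimal-norm problem over a closed affine subspace of a reflexive, strictly convex Banach space, and then invoke the standard existence/uniqueness dichotomy for such problems. I would write $g:=F''$ and regard the second-derivative networks as elements of the direct sum $L_p(E):=\bigoplus_{e\in E}L_p[0,\|e\|]$, equipped with the norm $\|\cdot\|_p$ already defined on $\mathcal{C}_p(E)$. Since $1<p<\infty$, each factor $L_p[0,\|e\|]$ is uniformly convex and reflexive, hence so is the finite direct sum $L_p(E)$.

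First I would establish the reduction. The second-derivative operator $F\mapsto F''$ is a bijection between $\mathcal{C}_p(E)$ and the set
\[
\mathcal{A}:=\{\, g\in L_p(E)\ :\ \langle g,B_{is}\rangle=d_{is},\ is\in N_B \,\}.
\]
Indeed, given any $g\in L_p(E)$, each component $f_e$ is recovered by integrating $g_e$ twice and fixing the two constants of integration through the interpolation conditions $f_e(0)=z_i$ and $f_e(\|e\|)=z_j$ for $e=e_{ij}$; by Lemma~\ref{lemma2} the reconstructed network $F$ belongs to $\mathcal{C}_p(E)$ exactly when $g\in\mathcal{A}$. Under this correspondence $\|F''\|_p=\|g\|_p$, so $(P_p)$ is equivalent to finding the element of $\mathcal{A}$ of least $L_p(E)$-norm. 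Because $F$ is determined by $g=F''$ together with the fixed interpolation data, uniqueness of the minimizing $g$ will transfer verbatim to uniqueness of $F^*$.

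Next I would check that $\mathcal{A}$ is a nonempty, closed, convex subset of $L_p(E)$. It is nonempty because $\mathcal{C}_p(E)\neq\emptyset$ (e.g. Clough--Tocher interpolants). It is affine, hence convex, since the defining constraints are linear in $g$. It is closed because each $B_{is}$ is bounded with compact support, so $B_{is}\in L_q(E)$ with $1/p+1/q=1$, and H\"older's inequality makes every functional $g\mapsto\langle g,B_{is}\rangle$ continuous on $L_p(E)$; thus $\mathcal{A}$ is a finite intersection of closed hyperplanes. Existence of a minimizer then follows from reflexivity: a minimizing sequence is norm-bounded, hence has a weakly convergent subsequence $g_{n_k}\rightharpoonup g^*$; since $\mathcal{A}$ is closed and convex it is weakly closed, so $g^*\in\mathcal{A}$, and weak lower semicontinuity of the norm gives $\|g^*\|_p\le\liminf_k\|g_{n_k}\|_p$, forcing $g^*$ to attain the infimum.

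Uniqueness finally follows from strict convexity of $L_p(E)$ for $1<p<\infty$. If the infimum is $0$ the only minimizer is $g^*=0$; otherwise, were $g_1\neq g_2$ two minimizers of common minimal norm $m>0$, their midpoint would again lie in the convex set $\mathcal{A}$ yet satisfy $\|\tfrac12(g_1+g_2)\|_p<m$, contradicting minimality. I expect the main obstacle to lie in the reduction step rather than in the functional analysis: one must verify carefully that $F\mapsto F''$ is genuinely a bijection onto $\mathcal{A}$ --- that the two integration constants on each edge are pinned down by the interpolation data, and that Lemma~\ref{lemma2} accounts for \emph{all} of the smoothness constraints defining $\mathcal{C}_p(E)$ --- so that both the objective and the constraint set translate faithfully into $L_p(E)$. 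Once this equivalence is secured, the reflexivity-plus-strict-convexity argument is routine.
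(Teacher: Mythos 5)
Your proof is correct, but it follows a genuinely different route from the paper. The paper argues directly in $\mathcal{C}_p(E)$: it takes a minimizing sequence $\{F_\nu\}$, uses Clarkson's inequalities (i.e., uniform convexity of $L_p$) together with the convexity of $\mathcal{C}_p(E)$ to show that $\{F''_\nu\}$ is a Cauchy sequence in $L_p(E)$, obtains a strong limit $G$ by completeness, and then verifies by hand that the network $F$ reconstructed from $G$ and the interpolation data lies in $\mathcal{C}_p(E)$ --- extracting an a.e.\ convergent subsequence, proving pointwise convergence of the first derivatives $f'_{e,\nu}$, and passing to the limit in the vanishing scalar triple product condition that characterizes the existence of a tangent plane at each vertex. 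Uniqueness is then obtained from the equality case in Minkowski's inequality. You instead push the whole problem into $L_p(E)$ at the outset via Lemma~\ref{lemma2}, so that the feasible set becomes the closed affine set $\mathcal{A}$ cut out by finitely many continuous linear functionals $g\mapsto\langle g,B_{is}\rangle$, and then invoke the standard existence/uniqueness result for minimal-norm elements of closed convex sets in a reflexive, strictly convex space. Your reduction is legitimate --- it is in fact exactly the reduction the paper itself carries out later, in the proof of Theorem~\ref{th2}, where the bijectivity of $F\mapsto F''$ from $\Gamma(E)$ onto $L_p(E)$ and the identification of $\mathcal{C}_p(E)$ with $\mathcal{A}$ are established --- and you correctly flag it as the step that carries all the weight, since Lemma~\ref{lemma2} must encode every smoothness constraint. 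What your approach buys is brevity and the avoidance of the delicate tangent-plane limit argument; what the paper's approach buys is strong (norm) convergence of the minimizing sequence rather than mere weak convergence of a subsequence, and independence from Lemma~\ref{lemma2} at this stage of the development. One small point worth making explicit in your write-up: the norm on $\bigoplus_{e\in E}L_p[0,\|e\|]$ is the $\ell_p$-combination of the componentwise norms, which identifies $L_p(E)$ with $L_p$ of the disjoint union of the edge intervals, so strict (indeed uniform) convexity of the direct sum is inherited from the scalar $L_p$ case.
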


\begin{proof} Let $1< p< \infty$. We recall that ${\cal C}_p(E)\not=\emptyset$. The set of real non-negative numbers
$\{\|F''\|_p\ |\ F\in {\cal C}_p(E)\}$ is bounded from below and therefore it has a greatest lower bound $d:=\inf_{F\in {\cal C}_p(E)}\|
F''\|_p$.
Let $\{F_\nu\}_{\nu=1}^{\infty}$, where $F_{\nu}:=\{f_{e,\nu}\}_{e\in E}\in
{\cal C}_p(E),$
be a minimizing sequence, i.~e. $\lim_{\nu\rightarrow\infty}\|F''_{\nu}\|_p = d$.
We denote
\begin{equation}\label{e220}
L_p(E):=\left\{G=\{g_e\}_{e\in E}\ :\ g_e\in L_p,\ e\in E\right\}.
\end{equation}

Next we prove that $\{F_\nu''\}_{\nu=1}^{\infty}$ is a fundamental sequence in $L_p(E)$, i.~e. $\lim_{\nu,\
\mu\rightarrow\infty}\|F_{\nu}''-F_{\mu}''\|_p=
0$.
 For this purpose we use the following Clarkson's inequalities (see~\citep{HS}, pp. 225, 227) which hold for $f,\, g\in L_p,\ 1<p<\infty,\
 1/p+1/q=1$,
\begin{eqnarray}
\|\frac{f+g}{2}\|_p^p+\|\frac{f-g}{2}\|_p^p&\leq &\frac{1}{2}\left(\|f\|_p^p+
\|g\|_p^p\right),\ \mbox{if}\ p\geq 2,\nonumber\\
&&\label{e210}\\
\|\frac{f+g}{2}\|_p^q+\|\frac{f-g}{2}\|_p^q&\leq &(\frac{1}{2}(\|f\|_p^p+
\|g\|_p^p))^{1/(p-1)},\ \mbox{if}\ 1<p<2.\nonumber
\end{eqnarray}
Since $F_{\nu},\,F_{\mu}\in {\cal C}_p(E)$ then
$(F_{\nu}+F_{\mu})/2\in {\cal C}_p(E)$. Hence
$\|(F_{\nu}''+F_{\mu}'')/2\|_p\geq d$.
Let $p\geq 2$. From the first inequality in (\ref{e210}) we obtain
\begin{eqnarray*}
\|\frac{F_{\nu}''-F_{\mu}''}{2}\|_p^p&\leq &\frac{1}{2}(\|F_{\nu}''\|_p^p+
\|F_{\mu}''\|_p^p)-\|\frac{F_{\nu}''+F_{\mu}''}{2}\|_p^p\\[1ex]
&\leq &\frac{1}{2}(\|F_{\nu}''\|_p^p+
\|F_{\mu}''\|_p^p)-d^p\rightarrow\frac{1}{2}(2d^p)-d^p=0\ \mbox{when}\ \nu ,\mu\rightarrow
\infty .
\end{eqnarray*}
For $1<p<2$ from the second inequality in (\ref{e210}) we obtain
\begin{eqnarray*}
\|\frac{F_{\nu}''-F_{\mu}''}{2}\|_p^q&\leq &(\frac{1}{2}(\|F_{\nu}''\|_p^p+
\|F_{\mu}''\|_p^p))^{1/(p-1)}-\|\frac{F_{\nu}''+F_{\mu}''}{2}\|_p^q\\[1ex]
&\leq &(\frac{1}{2}(\|F_{\nu}''\|_p^p+
\|F_{\mu}''\|_p^p))^{1/(p-1)}-d^q\\
&&\rightarrow (\frac{1}{2}(2d^p))^{1/(p-1)}-d^q=0\ \mbox{when}\ \nu
,\mu\rightarrow \infty .
\end{eqnarray*}

Therefore $\lim_{\nu,\
\mu\rightarrow\infty}\|F_{\nu}''-F_{\mu}''\|_p=
0$ for every $p,\ 1< p<\infty$, i.~e.
$\{F_{\nu}''\}_{\nu=1}^{\infty}$
is a fundamental sequence.
Since $L_p$ is a complete space then there exists curve network
$G=\{g_e\}_{e\in E}\in L_p(E)$ such that $g_e\in
L_p$ for every $e\in E$ and
\begin{equation}\label{e218}
\lim_{\nu\rightarrow\infty}\|F_{\nu}''-G\|_p=0.
\end{equation}
From (\ref{e218})
it follows that there exists a subsequence of
$\{F_{\nu}''\}_{\nu =1}^{\infty}$ that converges pointwise almost everywhere (a.e.)
to $G$. For simplicity we assume that
$\{F_{\nu}''\}_{\nu=1}^{\infty}$ is that subsequence, i.~e. $
\lim_{\nu\rightarrow\infty}f_{e,\nu}''(t)= g_e(t)\ \mbox{a.e. in}\
[0,\|e\|]. $
Moreover, from the continuity of the norm we have
$\lim_{\nu\rightarrow\infty}\|F_{\nu}''\|_p= \|G\|_p$, and hence
$\|G\|_p=d$.

Let $F=\{f_e\}_{e\in E}$ be the unique curve network that satisfies the interpolation conditions $F(V_i)=z_i,\ i=1,\dots ,n$ and  its
second derivative
$F''$ coincides a.e. with $G$.
To prove the existence of the solution to the problem, next we show that
$F=\{f_e\}_{e\in E}$ belongs to ${\cal C}_p(E)$. We have to show that for
every vertex $V_i$ there exists a tangent plane to the curve network $F$. First, we prove that
\begin{equation}\label{e213}
\lim_{\nu\rightarrow\infty}f_{e,\nu}'(t)=f_e'(t)
\mbox{ for every }\ t\in [0,\|e\|].
\end{equation}
Since $f_e''\in L_p$ then $f_e'\in AC$ and since $
\lim_{\nu\rightarrow\infty}f_{e,\nu}''(t)= f_e''(t)\ \mbox{a.e. in}\
[0,\|e\|] $, we have
 $$
\lim_{\nu\rightarrow\infty}\int_0^tf_{e,\nu}''(u)du=\int_0^tf_e''(u)du\
\mbox{for\ every}\ t\in [0,\|e\|],
$$
hence
\begin{equation}\label{e211}
\lim_{\nu\rightarrow
\infty}\left(f_{e,\nu}'(t)-f_{e,\nu}'(0)\right)=f_e'(t)-f_e'(0).
\end{equation}
From (\ref{e211}) after integration we obtain
$$
\lim_{\nu\rightarrow\infty}\left(f_{e,\nu}(t)-f_{e,\nu}(0)-tf_{e,\nu}'(0)\right
) = f_e(t)-f_e(0)-tf_e'(0),
$$
hence from the interpolation conditions we have
$$
\lim_{\nu\rightarrow\infty}\left(f_{e,\nu}(t)-tf_{e,\nu}'(0)\right)=
f_e(t)-tf_e'(0)\ \mbox{for\ every}\ t\in [0,\|e\|].
$$
In particular, for $t=\|e\|$ we obtain
$\lim_{\nu\rightarrow\infty}f_{e,\nu}'(0)=f_e'(0)$ and
(\ref{e213}) follows from (\ref{e211}).

Further on, if $\bf{a},\bf{b},\bf{c}$ are
vectors in $\mathbb{R}^3$, we denote by
$(\bf{a},\bf{b},\bf{c})$ their scalar triple product.

Functions $f_{e},\ e\in E$, defined by
(\ref{e0}) are parametric curves in $\mathbb{R}^3$ represented by
$(x=x_e(t),\ y=y_e(t),\ z=z_e(t)).$
Then vector ${\bf t_e}\in \mathbb{R}^3$ defined by
${\bf t_e}:=(\frac{x_j-x_i}{\| e_{ij}\| },
\frac{y_j-y_i}{\| e_{ij}\| },f'_{ij}(0))$ is a tangent vector to curve $f_e$ at point $V_i$. Let $V_i$ be a vertex in
$T$ of degree $m_i\geq 3$ (if $m_i=2$ then a tangent plane in $V_i$ always exists). Let $e_1$, $e_2$, and $e_3$ be three arbitrary edges incident to $V_i$. Since
$F_{\nu}\in{\cal C}_p(E)$ then $F_{\nu}$ has a tangent plane at $V_i$. A necessary and sufficient condition for the existence of such a
plane is
\begin{equation}\label{e240}
({\bf t_{e_1,\nu}, t_{e_2,\nu},
t_{e_3, \nu}})=0, \end{equation}
where ${\bf t_{e_1,\nu}},\ {\bf t_{e_2,
\nu}}$, and ${\bf t_{e_3,\nu}}$ are the three tangent vectors.
We take the limit in
 (\ref{e240}) for $\nu\rightarrow\infty$, use (\ref{e213}),
and obtain that the scalar triple product of the limit vectors ${\bf
t_{e_1}}, {\bf t_{e_2}}, {\bf t_{e_3}}$ is zero too.
The three edges $e_1$, $e_2$, and $e_3$ are arbitrarily chosen, hence the curve network $F$ has a tangent plane at point $V_i$, which has been
arbitrarily chosen too. Therefore
$F=\{f_e\}_{e\in E}$ belongs to ${\cal C}_p(E)$  and solves problem $(P_p)$.

It remains to prove uniqueness of the solution. Let $F_0$ and
$F_1$ be two solutions of $(P_p)$. Then
$\frac{1}{2}( F_0+F_1)\in {\cal C}_p(E)$.
From Minkowski's inequality it follows
\begin{equation}\label{e225}
\| \frac{1}{2}( F''_0+F''_1)\|_p\leq\frac{1}{2}\|F''_0\|_p+
\frac{1}{2}\|F''_1\|_p = \|F''_0\|_p =\|F''_1\|_p.
\end{equation}
Hence, in (\ref{e225}) we have equality which holds if and only if
$aF_0''=bF_1''\ $ a.e., where $a$ and $b$ are non-negative real numbers such that $a^2+b^2>0$. From the equality of the norms it
follows $a=b=1$, i.~e. $F''_0=F''_1\ $ a.e. which means $f_{0,e}''(t)=f_{1,e}''(t)$
a.e. in $[0,\|e\|]$ for every $e\in E$. Since $f_{0,e}(t)$ and $f_{1,e}(t)$
coincide at the endpoints of the edge $e$ then $f_{0,e}(t)=f_{1,e}(t)$ for every $t\in[0,\|e\|]$. Therefore
$F_0\equiv F_1$.   \qed
\end{proof}

\section{Characterization of the solution}

In this section we provide a full characterization of the solution $F^*$ to the extremal problem $(P_p)$
for $1<p<\infty$. Its existence and uniqueness have been already established in Theorem~\ref{th1}. Further, for simplicity we use the notation
$(x)^r_{\pm}\ :=\ |x|^r{\rm {sign}}(x),\ r\in \mathbb{R} ,\,x\in \mathbb{R}.$ Next we prove that
$(F^{*\prime\prime})_{\pm}^{p-1}$ can be represented as a linear combination of the basis curve networks defined by
(\ref{e5}). Finding of $F^*$ reduces to the unique solution of a system of equations.
The following theorem holds.
\begin{thm}\label{th2}
{\it Smooth interpolation curve network
$F^*=\{f^*_e\}_{e\in E}\in
{\cal C}_p(E)$ is a solution to problem $(P_p),\ 1<p<\infty$, if and only if
$$F^{*\prime\prime}=\left(\sum_{is\in N_B}\alpha_{is}B_{is}\right)
^{q-1}_{\pm}, $$ where $\alpha_{is}$ are real numbers and $1/p+1/q=1$.}
\end{thm}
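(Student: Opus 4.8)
The plan is to recast $(P_p)$ as a constrained minimum-norm problem in $L_p(E)$ and then combine a variational (stationarity) argument with a finite-dimensional duality lemma. By Lemma~\ref{lemma2}, a curve network $F$ lies in ${\cal C}_p(E)$ precisely when $G:=F''$ satisfies the finitely many linear constraints $\langle G,B_{is}\rangle=d_{is}$, $is\in N_B$; conversely, every $G\in L_p(E)$ satisfying these constraints is realized as the second derivative of a unique interpolating network (integrate $g_e$ twice on each edge and fix the two constants by the endpoint data $z_i,z_j$, after which Lemma~\ref{lemma2} supplies smoothness). Hence minimizing $\|F''\|_p$ over ${\cal C}_p(E)$ is equivalent to minimizing $\|G\|_p$ over the nonempty affine set ${\cal A}:=\{G\in L_p(E):\langle G,B_{is}\rangle=d_{is},\ is\in N_B\}$, whose direction space is $K:=\{H\in L_p(E):\langle H,B_{is}\rangle=0,\ is\in N_B\}$. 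Write $G^*:=F^{*\prime\prime}$.

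For necessity, I would use that for $1<p<\infty$ the map $\epsilon\mapsto\|G^*+\epsilon H\|_p^p$ is differentiable at $\epsilon=0$ with derivative $p\langle (G^*)^{p-1}_{\pm},H\rangle$; differentiation under the integral sign is justified by dominating the difference quotient by $(|G^*|+|H|)^{p-1}|H|\in L_1$, using H\"older's inequality and $(p-1)q=p$. Since $G^*$ minimizes $\|\cdot\|_p$ over the affine set ${\cal A}$ and the admissible variations are exactly the $H\in K$, stationarity forces $\langle (G^*)^{p-1}_{\pm},H\rangle=0$ for all $H\in K$. Note also that $(G^*)^{p-1}_{\pm}\in L_q$, because $|G^*|^{(p-1)q}=|G^*|^p\in L_1$.

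The key step is then a duality/annihilator identification. The maps $H\mapsto\langle H,B_{is}\rangle$ are finitely many continuous linear functionals on $L_p(E)$ (the $B_{is}$ are bounded, hence in $L_q$), and $K$ is their common kernel. The standard finite-dimensional fact that a linear functional vanishing on $\bigcap_{k}\ker\phi_k$ lies in $\mathrm{span}\{\phi_k\}$ then yields $(G^*)^{p-1}_{\pm}=\sum_{is\in N_B}\alpha_{is}B_{is}$ a.e.\ for some real $\alpha_{is}$, the uniqueness of which follows from Lemma~\ref{lemma1}. Inverting $x\mapsto(x)^{p-1}_{\pm}$, whose inverse is $y\mapsto(y)^{q-1}_{\pm}$ since $(p-1)(q-1)=1$, gives $G^*=\big(\sum_{is\in N_B}\alpha_{is}B_{is}\big)^{q-1}_{\pm}$, as claimed.

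For sufficiency I would argue by convexity rather than rerun the variational computation. If $F^*\in{\cal C}_p(E)$ with $G^*=(\sum\alpha_{is}B_{is})^{q-1}_{\pm}$, then $(G^*)^{p-1}_{\pm}=\sum\alpha_{is}B_{is}$ (again $(p-1)(q-1)=1$), and for any competitor $G\in{\cal A}$ the difference $H:=G-G^*$ lies in $K$. The first-order inequality for the strictly convex functional $\|\cdot\|_p^p$ gives $\|G\|_p^p\ge\|G^*\|_p^p+p\langle (G^*)^{p-1}_{\pm},H\rangle=\|G^*\|_p^p$, since $\langle\sum\alpha_{is}B_{is},H\rangle=\sum\alpha_{is}\langle B_{is},H\rangle=0$; thus $F^*$ solves $(P_p)$, and strict convexity (equivalently Theorem~\ref{th1}) gives uniqueness. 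I expect the main obstacle to be the necessity direction: rigorously establishing differentiability of the $L_p$-norm functional and correctly passing to the annihilator, whereas the inversion of $(\cdot)^{p-1}_{\pm}$ and the entire sufficiency direction are routine once the identity $(p-1)(q-1)=1$ is in hand.
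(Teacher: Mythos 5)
Your proposal is correct and follows the same overall route as the paper: reduce $(P_p)$ via Lemma~\ref{lemma2} and the two-fold integration bijection to the problem of minimizing $\|G\|_p$ over the affine set $\{G\in L_p(E):\langle G,B_{is}\rangle=d_{is},\ is\in N_B\}$, and then extract the representation $(F^{*\prime\prime})^{p-1}_{\pm}=\sum_{is}\alpha_{is}B_{is}$ from first-order optimality. Where you differ is in how that last step is justified. The paper invokes a general Lagrange-multiplier theorem together with the Euler equation (citing \cite{Sh}) and reads off $p|\tilde G|^{p-1}\mathrm{sign}(\tilde G)=\sum\lambda_{is}B_{is}$ as both necessary and sufficient; you instead prove the multiplier rule by hand, in two self-contained pieces: for necessity, G\^ateaux differentiability of $\|\cdot\|_p^p$ (with the correct $L_1$ domination via $(p-1)q=p$) gives $\langle (G^*)^{p-1}_{\pm},H\rangle=0$ on the direction space $K$, and the finite-codimension annihilator lemma places $(G^*)^{p-1}_{\pm}\in L_q$ in $\mathrm{span}\{B_{is}\}$; for sufficiency, the pointwise convexity inequality $\|G\|_p^p\ge\|G^*\|_p^p+p\langle (G^*)^{p-1}_{\pm},G-G^*\rangle$ closes the converse direction, which the paper treats only implicitly. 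Your version buys rigor and self-containment -- in particular it makes explicit why multipliers exist in this infinite-dimensional setting (finitely many continuous constraints, so the annihilator argument applies with no constraint qualification needed) and why the stationarity condition is also sufficient -- at the cost of a somewhat longer argument; the paper's version is shorter but leans on the cited optimization results as a black box. The inversion $(p-1)(q-1)=1$ and the uniqueness of the coefficients via Lemma~\ref{lemma1} are handled identically in both.
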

\begin{proof}
Let us consider the set of interpolation curve networks
\begin{eqnarray*}
\Gamma (E):=\left\{F=\{f_e\}_{e\in E} : F(V_i)=z_i,\ i=1,\dots
,n,\ f_e'\in
AC,f''_e\in L_p,\ e\in E\right\}
\end{eqnarray*}
and the mapping
\be{e203}
\Gamma (E)\ni F=\{f_e\}_{e\in E}\ \mapsto\ \{g_e\}_{e\in E}=G\in
L_p(E),
\ee
where $G=\{g_e\}_{e\in E}$ is such that
$g_e=f_e''$, and the class $L_p(E)$ is defined by (\ref{e220}). If $F\in \Gamma(E)$
then obviously $F''=\{f_e''\}_{e\in
E}\in L_p(E)$. Now let $G=\{g_e\}_{e\in E}$ belong to $L_p(E)$.
We integrate twice the function $g_e,\ e=e_{ij}\in E$, use the two interpolation conditions
$F(V_i)=z_i$ and $F(V_j)=z_j$ at the end of the interval
$[0,\|e\|]$, and obtain curve network $F=\{f_e\}_{e\in E}$ such that
$F''=G$ and $F\in\Gamma(E)$. Therefore the mapping (\ref{e203}) is a bijection.
According to Lemma~\ref{lemma2},  (\ref{e203}) maps
the set ${\cal C}_p(E)\subset \Gamma (E)$ defined by (\ref{e31}) onto the following subset of
$L_p(E)$: $$\{G\ :\ G\in L_p(E),\ \int_E GB_{is}dt=d_{is},\ is\in N_B
\}.$$
Thus, problem $(P_p)$, $1<p<\infty$ is equivalent to the following problem
\begin{eqnarray}
&&\mbox{\it\ Find\ }{\tilde G}\in {\cal C}_p(E)\  \mbox{\it\
such\ that }\ \| {\tilde G}\|_p=\inf_{G\in L_p(E)}\|
G\|_p,\nonumber\\[-.2cm]
  ({\bf P}_p) \hspace*{1em} &&\label{e41}\\[-.2cm]
&&\mbox{\it under the conditions}\
\int _EG(t)B_{is}(t)dt=d_{is},\ is\in N_B.\nonumber
\end{eqnarray}

Using the Lagrange multipliers (see, e.g,~\cite{Sh}, pp. 113) we obtain that $G$
($F^{*\prime\prime}$, respectively) is a solution to problem $(P_p)$ for
$1<p<\infty$ if and only if there exist real numbers
$\lambda_{is},\ is\in N_B$ such that ${\tilde G}$ is a solution to the problem
\be{e167} \inf_{G\in L_p(E)}\left
(\int_E(|G(t)|^p-\sum_{is\in N_B}\lambda_{is}G(t)B_{is}(t))dt+\sum_{is\in N_B}\lambda_{is}d_{is}\right ). \ee Moreover, the partial derivative w.r.t. $G$ of the expression in the integral in (\ref{e167}) is zero for the extremal function ${\tilde G}$ (it follows from the Euler equation, see~\citep{Sh}, pp. 94). Hence, $$p|{\tilde
G}|^{p-1}{\rm {sign}}({\tilde G})-\sum_{is\in N_B}\lambda_{is}B_{is}=0.$$ From the last inequality it follows that the solution ${\tilde G}$ ($F^{*\prime\prime}$, respectively) has the form
\be{e22} F^{*\prime\prime}={\tilde G}=\left(\sum_{is\in N_B}\alpha_{is}B_{is} \right)_{\pm}^{q-1}, \ee where
$\alpha_{is}=\lambda_{is}/p,\ is\in N_B$, are real numbers and
$1/p+1/q=1$. Moreover, the representation (\ref{e22}) is unique. \qed
\end{proof}

As a consequence, we can formulate the following theorem.
\begin{thm}\label{th3}
{\it Curve network $F\in {\cal C}_p(E)$ solves problem $(P_p)$ for
$1<p<\infty$ if and only if $F^{\prime\prime}=\left(\sum_{is\in N_B}\alpha_{is}B_{is}\right )_{\pm}^{q-1}$.
The coefficients
$\alpha_{is}$ are the unique solution to the following system of equations
\be{e11}
\int_E\left(\sum_{is\in N_B}\alpha_{is}B_{is}
\right)_{\pm}^{q-1}B_{kl}dt=d_{kl},\ kl\in N_B.
\ee
}
\end{thm}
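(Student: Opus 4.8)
The plan is to obtain Theorem~\ref{th3} as a bookkeeping consequence of Theorem~\ref{th2}, Lemma~\ref{lemma1}, Lemma~\ref{lemma2}, and the uniqueness half of Theorem~\ref{th1}; no new variational analysis should be required. The characterization ``$F$ solves $(P_p)$ precisely when $F''=(\sum_{is\in N_B}\alpha_{is}B_{is})_{\pm}^{q-1}$'' is exactly the content of Theorem~\ref{th2}, so the only genuinely new claims are that the coefficients $\alpha_{is}$ must satisfy the system (\ref{e11}) and that this system is uniquely solvable.

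First I would derive (\ref{e11}). If $F$ solves $(P_p)$ then $F\in{\cal C}_p(E)$, so Lemma~\ref{lemma2} gives $\langle F'',B_{kl}\rangle=d_{kl}$ for every $kl\in N_B$; substituting the representation of $F''$ from Theorem~\ref{th2} into these identities is literally (\ref{e11}). Conversely, any solution of (\ref{e11}) yields an optimal network: setting $G:=(\sum_{is\in N_B}\alpha_{is}B_{is})_{\pm}^{q-1}$ and using $1/p+1/q=1$ one computes $|G|^p=|\sum_{is\in N_B}\alpha_{is}B_{is}|^{q}$, which is integrable since each $B_{is}$ is bounded, so $G\in L_p(E)$. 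Then (\ref{e11}) reads $\langle G,B_{kl}\rangle=d_{kl}$ for all $kl\in N_B$, so by the bijection established in the proof of Theorem~\ref{th2} together with Lemma~\ref{lemma2}, $G$ is the second derivative of a unique $F\in{\cal C}_p(E)$; since $F''$ has the form of Theorem~\ref{th2}, this $F$ solves $(P_p)$. Feeding the optimal network $F^*$ of Theorem~\ref{th1} through this correspondence shows that (\ref{e11}) does have a solution.

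For uniqueness suppose $\{\alpha_{is}\}$ and $\{\beta_{is}\}$ both solve (\ref{e11}). By the converse just described each produces an optimal network, and by the uniqueness part of Theorem~\ref{th1} these two networks coincide; hence their second derivatives agree almost everywhere, i.e.
\[
\left(\sum_{is\in N_B}\alpha_{is}B_{is}\right)_{\pm}^{q-1}
=\left(\sum_{is\in N_B}\beta_{is}B_{is}\right)_{\pm}^{q-1}
\quad\text{a.e.}
\]
Since $q-1>0$, the map $x\mapsto (x)_{\pm}^{q-1}=|x|^{q-1}\mathrm{sign}(x)$ is a strictly increasing bijection of $\R$; applying its inverse pointwise removes the outer power and gives $\sum_{is\in N_B}\alpha_{is}B_{is}=\sum_{is\in N_B}\beta_{is}B_{is}$ a.e. Both sides are continuous, piecewise-linear functions, so this a.e.\ equality holds everywhere, and Lemma~\ref{lemma1} then forces $\alpha_{is}=\beta_{is}$ for all $is\in N_B$.

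I expect the only real subtlety --- the main obstacle --- to be this last stripping-off of the nonlinearity: one must use that $(\cdot)_{\pm}^{q-1}$ is a bijection of all of $\R$ (not merely monotone on the positive axis) in order to invert the a.e.\ identity, and one must upgrade the resulting a.e.\ equality of two continuous functions to pointwise equality before the linear independence of Lemma~\ref{lemma1} can be applied. Everything else is a direct transcription of the earlier results.
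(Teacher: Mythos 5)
Your proposal is correct and follows essentially the same route as the paper: the system (\ref{e11}) is obtained by combining the representation from Theorem~\ref{th2} with the interpolation conditions of Lemma~\ref{lemma2}, and uniqueness of the coefficients is deduced from the uniqueness of the optimal network together with the linear independence of Lemma~\ref{lemma1}. The only difference is that you spell out the step the paper leaves implicit --- inverting the strictly increasing bijection $x\mapsto(x)_{\pm}^{q-1}$ to pass from equality of the second derivatives to equality of the linear combinations before invoking linear independence --- which is a worthwhile clarification but not a different argument.
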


\begin{proof} From Theorem~\ref{th2} it follows that there exist real numbers
$\alpha_{is},\ is\in N_B$ such that the second derivative of the unique solution
$F$ to problem $(P_p)$ for $1<p<\infty$ is $F^{\prime\prime}=(\sum_{is\in N_B}\alpha_{is}B_{is}
)_{\pm}^{q-1}$. On the other hand, according to Lemma~\ref{lemma2}, $F$ is a smooth interpolation curve network if and only if $\langle F'',B_{kl}\rangle =d_{kl}$
for every $kl\in N_B$. Therefore numbers $\alpha_{is}$ are a solution to system (\ref{e11}).

The uniqueness of the solution to system (\ref{e11}) follows from the uniqueness of the optimal curve network and from the linear independence of the basic curve networks $B_{is},\ is\in N_B$, provided by Lemma~\ref{lemma1}. \qed
\end{proof}

\section{Examples and results}

To find the minimum $L_p$-norm networks for $1<p<\infty$ we have to solve system (\ref{e11}) which is nonlinear except in the case where $p=2$ when it is linear. We have adopted a Newton's algorithm \cite{V3} to solve this type of systems. We use Mathematica package to visualize the extremal curve networks. Below we present results of our experiments where solutions of
$(P_p)$ were computed and visualized for different $p$ on two small data sets.

\begin{example}\label{example1}
We consider data obtained from a regular triangular pyramid. We have $n=4$, $ V_1=(-1/2,-\sqrt{3}/6)$, $V_2=(1/2,-\sqrt{3}/6)$, $V_3=(0,\sqrt{3}/3)$, $V_4=(0,0)$, and $z_i=0$, $i=1,2,3$, $z_4=-1/2.$ The set of indices defining the edges of the corresponding triangulation is $N_B\,=\,\{ 12,23,31,41,42,43\}$.
We have $m_i=3$ for $i=1,\dots, 4$ and four basic curve networks $B_{is}$, $i=1,\dots, 4$, $s=1$, are defined.
The triangulation, the minimum ${L_p}$-norms network ${F_p}$, and the corresponding ${L_p}$-norms of the second derivatives ${\|F_p^{\prime\prime}\|_p}$ for ${p=2,\,3}$,\ {\rm and}\ $6$ are shown in Fig.~\ref{example} (left).
\end{example}

\begin{example}\label{example2}
We have $n=7$ and the data are $P_1=(-2,0,0)$, $P_2=(-1.6,0,-2)$, $P_3=(0,0,-3)$, $P_4=(1.6,0,-2.5)$, $P_5=(2,0,0)$, $P_6=(-0.5,2.3,-1.7)$, and $P_7=(0.5,-2,-1.9)$. The set of indices defining the edges of the corresponding triangulation $T_2$ is  $N_B\,=\,\{ 17,12,16,27,23,26,37,34,36,45,46,47,56,57\}$.
We have $m_1=m_5=3$, $m_2=m_3=m_4=4$, $m_6=m_7=5$, and hence, the number of the basic curve networks $B_{is}$ is fourteen.
The triangulation, the minimum ${L_p}$-norms network ${F_p}$, and the corresponding ${L_p}$-norms of the second derivatives ${\|F_p^{\prime\prime}\|_p}$ for ${p=2,\,3}$,\ {\rm and}\ $6$ are shown in Fig.~\ref{example} (right).
\end{example}

\begin{figure}[H]
\begin{minipage}[b]{3.in}
\centering
\includegraphics[width=.8\textwidth]{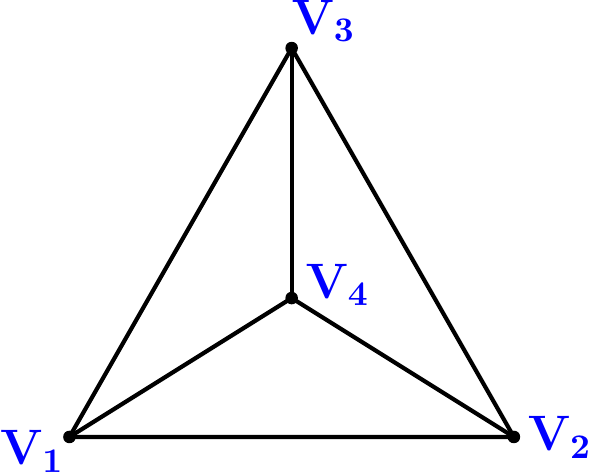}
\vspace*{.5cm}

\includegraphics[width=.9\textwidth]{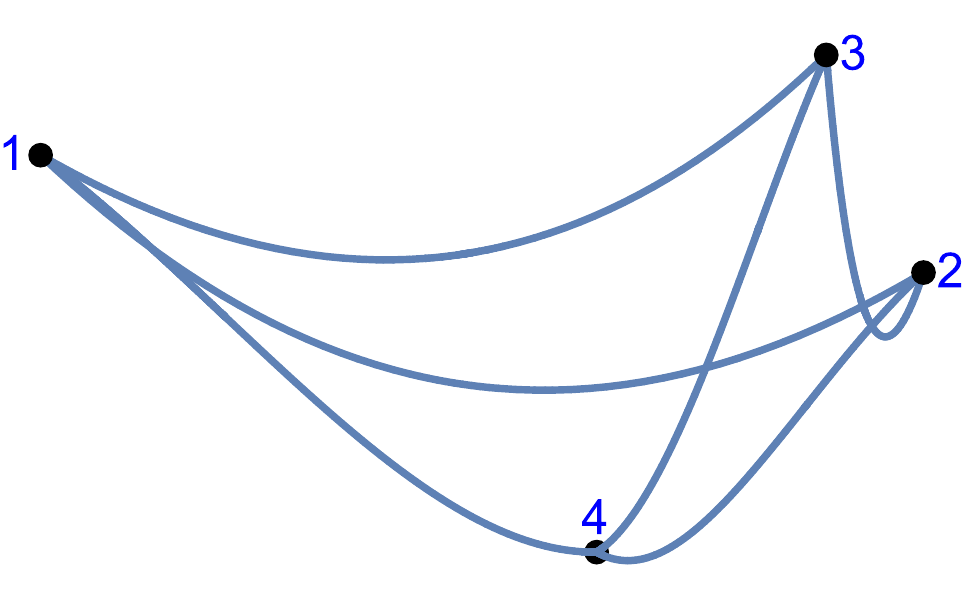}
\centering
{${p=2,\ \|F_2^{\prime\prime}\|_2=4.72119}$}
\includegraphics[width=.9\textwidth]{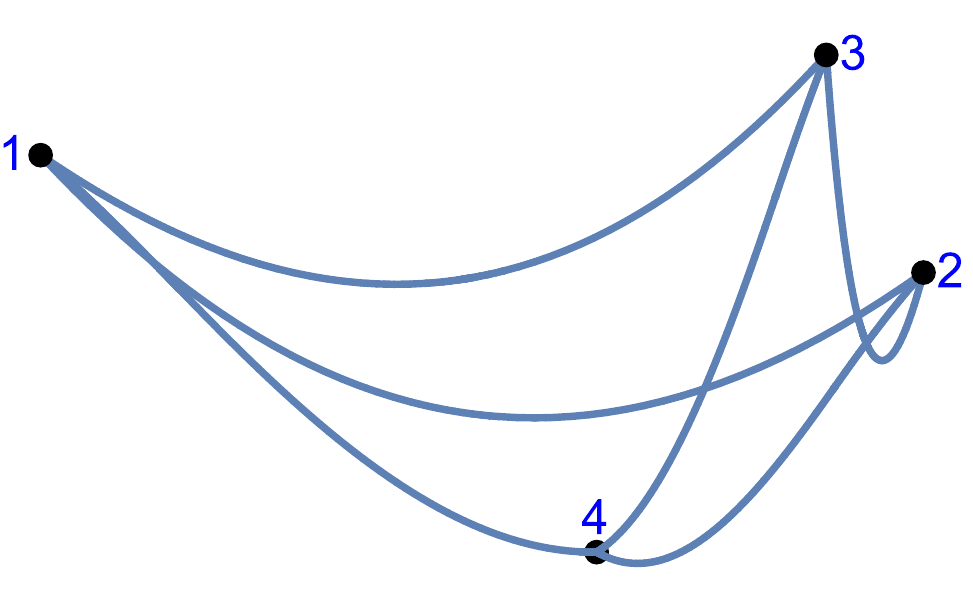}
\centering
{${p=3,\ \|F_3^{\prime\prime}\|_3=4.00185}$}
\includegraphics[width=.9\textwidth]{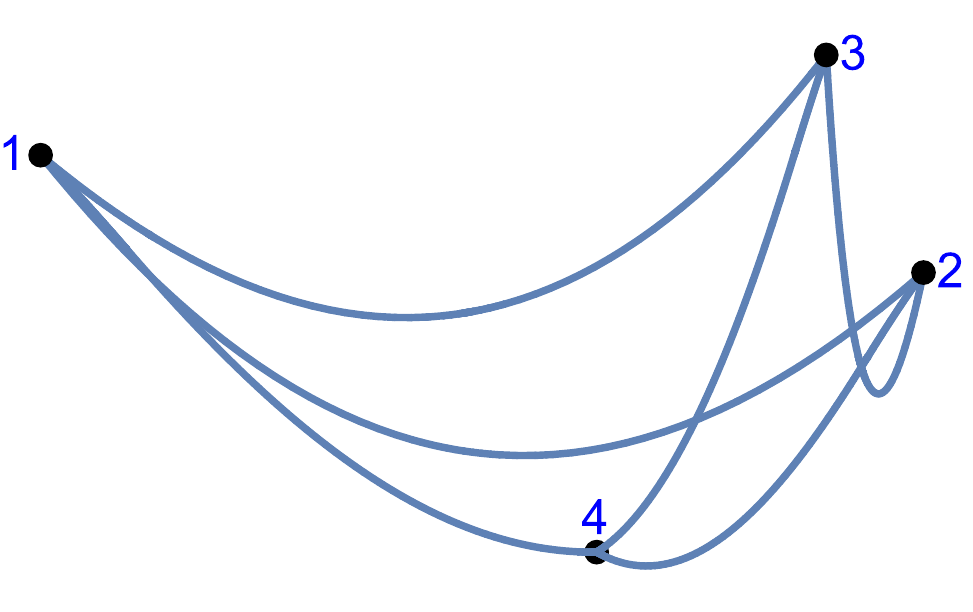}
\centering
{${p=6,\ \|F_6^{\prime\prime}\|_6=3.40846}$}
        \end{minipage}
        ~~~~~~~~~~~~~~~~~~~~~
               \begin{minipage}[b]{3.in}
               \hspace*{-.2cm}
            \centering
\includegraphics[width=.9\textwidth]{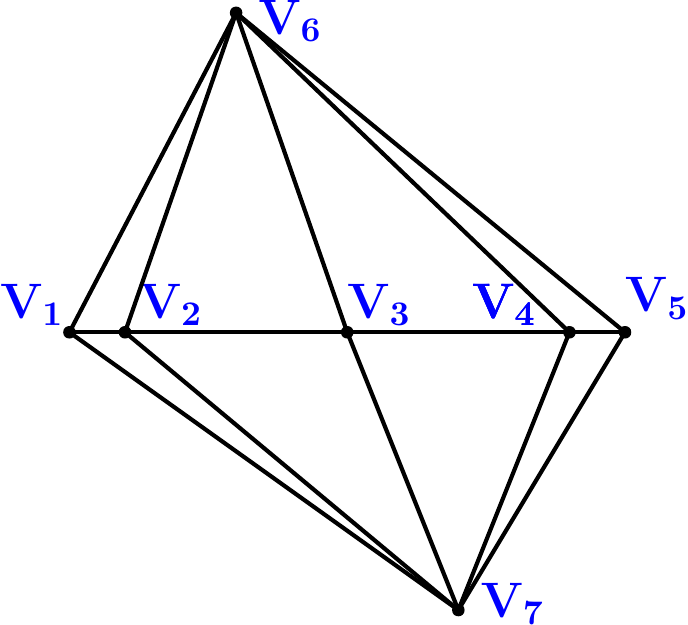}
\vspace*{.3cm}

\includegraphics[width=.92\textwidth]{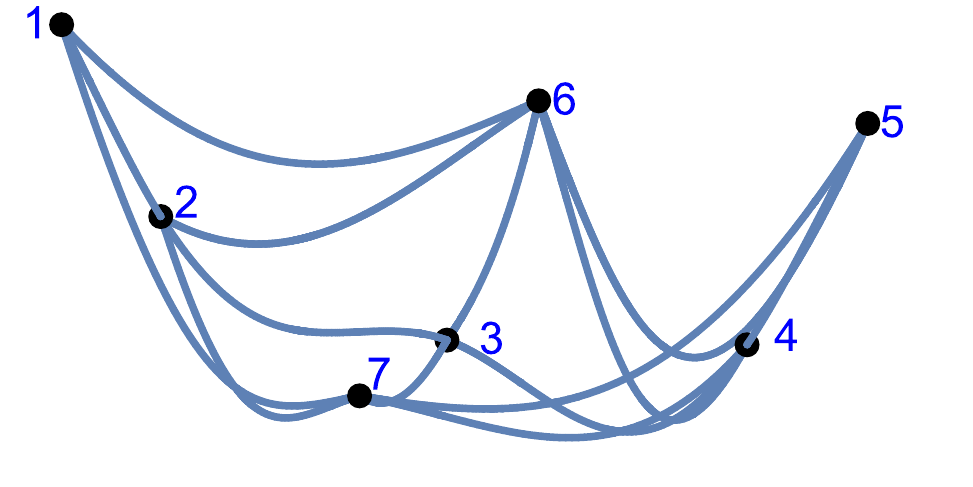}
\centering
{${p=2,\ \|F_2^{\prime\prime}\|_2=13.3007}$}
\vspace*{.4cm}

\includegraphics[width=.97\textwidth]{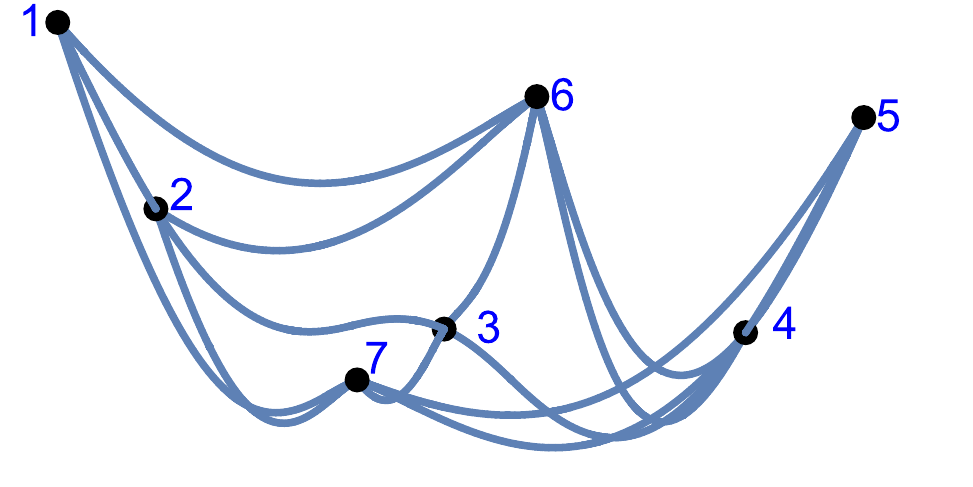}
\centering
{${p=3,\ \|F_3^{\prime\prime}\|_3=9.31125}$}
\vspace*{.9cm}

\includegraphics[width=.92\textwidth]{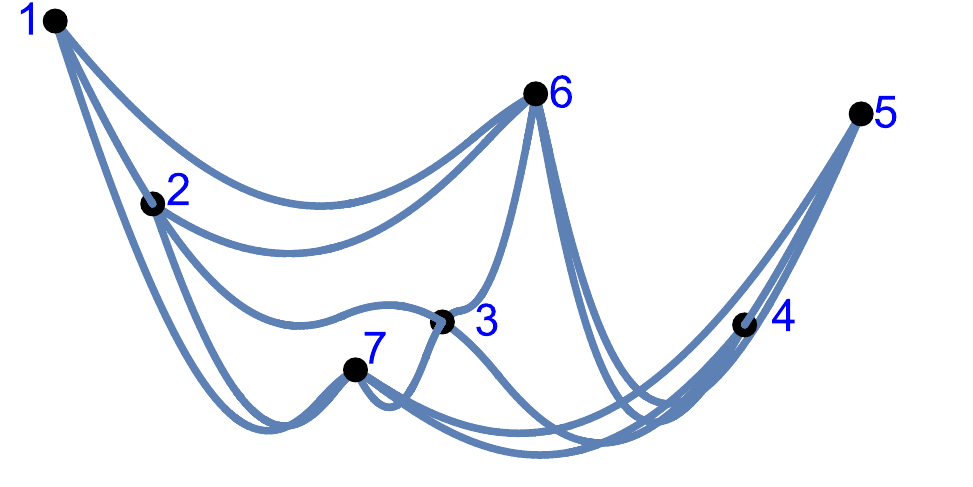}
\centering
{${p=6,\ \|F_6^{\prime\prime}\|_6=7.1822}$}
        \end{minipage}
        \caption{The triangulation, the minimum ${L_p}$-norm networks ${F_p}$, and the corresponding ${L_p}$-norms ${\|F_p^{\prime\prime}\|_p}$ for ${p=2,\,3}$,\ {\rm and}\ $6$ for Example~\ref{example1} (left), and Example~\ref{example2} (right).}\label{example}
        \end{figure}

\section{Conclusions and future work}
\label{sec:3}

In this paper we considered the extremal problem of interpolation of scattered data in $\mathbb{R}^3$ by smooth minimum $L_p$-norm networks for $1<p<\infty$. We proved the existence and the uniqueness of the solution for $1<p<\infty$ and provided its complete characterization.
We presented numerical experiments and gave examples to visualize and support the obtained results.

The case $p=\infty$ is not completely understood and needs to be studied further. First of all it is known that the solution in this case is not unique. Second, the approach based on Lagrange multipliers can not be applied directly to the case $p=\infty$. Another interesting question that arises is whether the sequence of solutions for $1<p<\infty$ converges as $p\rightarrow\infty$, and if yes, what is the limit?

\section*{Acknowledgments.} This work was supported by Sofia University Science Fund
Grant No. 80-10-145/2018, and by European Regional Development Fund and the Operational Program ``Science and Education for Smart Growth" under contract № BG05M2OP001-1.001-0004 (2018-2023).

%\section*{References}

\bibliographystyle{amsplain}
\bibliography{references}

\end{document}